\documentclass[11pt,makeidx, reqno]{amsart}

\usepackage{amsthm,amsfonts,amssymb,amsmath,amscd,amsrefs,thmtools}
\usepackage{mathtools}
\usepackage[all]{xy}
\usepackage{xr-hyper}
\usepackage{color}
\usepackage{bbm}
\usepackage{amssymb}
\usepackage{xfrac}
\usepackage{thm-restate}
\definecolor{crimsonglory}{rgb}{0.75, 0.0, 0.2}
\definecolor{darkblue}{rgb}{0.0, 0.0, 0.55}
	\definecolor{deepskyblue}{rgb}{0.0, 0.75, 1.0}

\usepackage[colorlinks=true,linkcolor = blue, citecolor = deepskyblue]{hyperref}

\usepackage[OT2,OT1]{fontenc}
\usepackage{mathrsfs}
\usepackage{multirow}
\usepackage{graphicx}
\usepackage{enumitem}

\newtheoremstyle{mystyle}
  {8pt}% measure of space to leave above the theorem. E.g.: 3pt
  {3pt}% measure of space to leave below the theorem. E.g.: 3pt
  {\em}% name of font to use in the body of the theorem
  {}% measure of space to indent
  {\scshape}% name of head font
  {.}% punctuation between head and body
  {3pt}% space after theorem head; " " = normal interword space
  {}% Manually specify head

\theoremstyle{mystyle}

\usepackage{xcolor}
\definecolor{bg}{RGB}{45,34,25}      % pick one row’s background
\definecolor{txt}{RGB}{230,210,180} 
\newtheorem{theorem}{Theorem}[section]

\newtheorem{thm}[theorem]{\textsc{Theorem}}

\newtheorem{lemma}[theorem]{\textsc{Lemma}}
\newtheorem{prop}[theorem]{\textsc{Proposition}}

\theoremstyle{definition}

\usepackage{color}

\title[]{The least quadratic residue and \\ integers represented by quadratic forms}
\author{K. Soundararajan}
\address{Department of Mathematics, Stanford University, Stanford, CA 94305, USA}
\author{Jo\~ao C. C. Vargas}
\address{Department of Mathematics, University of Toronto, Toronto, ON M5S 2E4, Canada}

\dedicatory{To Roger Heath-Brown on the occasion of  his 75th birthday}
\thanks{The first author is partially supported through NSF grant DMS-2100933}

\begin{document}

\begin{abstract} Let $\ell(n)$ denote the least non-trivial reduced quadratic residue modulo $n$; that is, 
$\ell(n)$ denotes the smallest square-free integer $r>1$ with $(r,n)=1$ and $r\equiv x^2 \bmod {n}$.   
We establish nearly optimal bounds for $\ell(n)$, both in terms of the magnitude of $n$ and of its number of prime factors $\omega(n)$. In particular, we construct moduli $n$ for which $\ell(n)$ is unexpectedly large. As an application of our results, we prove bounds for the rate at which binary quadratic forms with bounded discriminant represent all positive integers up to $N$.
 \end{abstract}
 
\maketitle

\section{Introduction}\label{intro}

\noindent Given a positive integer $n$, we say that $r$ is a quadratic residue $\bmod\  {n}$ if $r\equiv x^2 \bmod {n}$ for some integer $x$.  This paper studies $\ell(n)$ which denotes the least square-free integer $r>1$ with $(r,n)=1$ and $r$ being a quadratic residue $\bmod\ n$.  Equivalently,    
\begin{equation}\label{l(n)Defn}
    \ell(n) := \min\{ r \in \mathbb{N} \setminus \{1, 4, 9, \dots\}: r \equiv x^2 \bmod{n},\, \gcd(r, n) = 1\}.
\end{equation}
 For instance, if $p >5$ is a prime number, then note that $\ell(p)\le 6$ since one of $2$, $3$ or $6$ must be a quadratic residue $\bmod \ p$.  One of our goals will be to obtain bounds for $\ell(n)$, both in terms of the size of $n$ and also in terms of the number of prime factors of $n$.  We shall also discuss applications of a closely related problem to the study of representing integers by a family of binary quadratic forms.

 Write the prime factorization of $n$ as $n=2^e p_1^{e_1} \cdots p_k^{e_k}$ where the $p_j$ are odd primes with exponents $e_j\ge 1$, and the exponent of $2$ is $e\ge 0$.   Then a reduced residue $r \bmod n$ is a quadratic residue precisely when $(\frac{r}{p_j}) =1$ for all $1\le j\le k$, and $r\equiv 1 \bmod{2^e}$ when $0\le e\le 3$, and $r\equiv 1 \bmod{8}$ for $e\ge 3$. Thus in studying this problem one may assume that all odd primes dividing $n$ occur to exponent $1$, and that $2$ occurs to exponent at most $3$.   Further, the quadratic residues $r$ lie in a set of density $\asymp 2^{-k}$.

 \begin{thm}\label{thm1.1}  If $k$ denotes the number of odd prime factors of $n$ then 
    \[
    \ell(n) \le Ck^2 4^k,
    \]
    for an absolute constant $C$.  Further, for each $k\ge 2$ there exist infinitely many odd integers $n$ with exactly $k$ distinct prime factors satisfying 
    \[
    \ell(n) \ge 4^{k- Ck/\log k},
    \]
    for some absolute constant $C$.  
\end{thm}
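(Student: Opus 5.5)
Both halves of Theorem~\ref{thm1.1} reduce to a linear-algebra problem over $\mathbb{F}_2$. By the discussion preceding the theorem, we may take $n=2^e p_1\cdots p_k$ with $e\le 3$. Then a square-free $r$ coprime to $n$ is a quadratic residue precisely when the vector $v(r)=\big((\tfrac{r}{p_j})\big)_{j}\in\{\pm1\}^k\cong\mathbb{F}_2^k$ is trivial and $r\equiv 1 \bmod 2^{\min(e,3)}$. The 2-adic condition is again a condition on characters, namely at most two more characters mod $8$. Let $\chi$ denote the resulting homomorphism from the free abelian group on the odd primes $q\nmid n$ (together with the sign/2-adic data) into $G=\mathbb{F}_2^{k+2}$. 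Then $\ell(n)$ is essentially the least square-free product $r>1$ of such primes with $\chi(r)=0$. The key observation is the pigeonhole, or "zero-sum", principle: any $k+3$ elements of $G$ contain a nonempty subset summing to $0$.

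\textbf{Upper bound.} The plan is to find $k+3$ primes $q_1<\dots<q_{k+3}$ coprime to $n$ whose product is small. Since at most $k$ primes divide $n$, the first $2k+3$ primes include at least $k+3$ coprime to $n$. Each of these is $\ll k\log k$. A nonempty subset of them has trivial character sum, and its product $r$ is square-free, exceeds $1$, and is a quadratic residue. This gives only $\ell(n)\le (Ck\log k)^{k+3}$, which is far too weak. So instead of single primes I would use a greedy or Olson-type argument in $\mathbb{F}_2^k$. Build a chain of subspaces $V_i=\langle\chi(q): q\le y_i\rangle$. For the product of the relevant primes to be small, the dimension must increase quickly: if the dimension stalls, then all primes in a range $(y,2y]$ have characters inside a fixed subspace of dimension $d$. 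The goal is to show that either $d$ is large or there is a collision early. Concretely, I aim to exhibit $r$ as a product of two integers $a<b\le B$ with $\chi(a)=\chi(b)$, taking $r=ab/\gcd(a,b)^2$, where $a,b$ range over square-free integers coprime to $n$ up to $B$. By pigeonhole, a collision occurs once the number of such integers exceeds $2^{k+2}$. The number of square-free integers up to $B$ coprime to $n$ is $\gg B\prod_{p|n}(1-1/p)\gg B/\log k$, so taking $B\asymp 2^k\log k$ suffices. That gives $r\le B^2\ll 4^k(\log k)^2$, which is even better than $Ck^24^k$. The one thing to check is that $r>1$, that is, $a\ne b$, which holds automatically. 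If the sieve lower bound proves awkward, I would use the crude count $B\cdot\phi(n)/n$ with $\phi(n)/n\gg 1/\log k$ as above, or simply lose a factor $k$, which still lies within $Ck^24^k$.

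\textbf{Lower bound.} This is the main obstacle. We need $n$ with $k$ prime factors such that no square-free $r<4^{k-Ck/\log k}$ coprime to $n$ has $\chi(r)=0$. The approach is to prescribe the characters. Let $x=4^{k-Ck/\log k}$ and let $q_1,\dots,q_m$ be the primes up to $x$. We want an assignment $\chi:\{q_i\}\to\mathbb{F}_2^k$ such that no product of distinct $q_i$ up to $x$ has zero sum, and then realise it by choosing primes $p_j$ via CRT and Dirichlet's theorem, with infinitely many choices. Since $\chi(q_i)_j=(\tfrac{q_i}{p_j})$ can be prescribed independently for each $p_j$ by quadratic reciprocity and CRT modulo $8\prod q_i$, any $k\times m$ sign matrix is realisable. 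So the problem is purely combinatorial: find vectors $v_q\in\mathbb{F}_2^k$ for primes $q\le x$ with $\sum_{q|r}v_q\neq0$ for all square-free $1<r\le x$. I would try a random choice. For each $r$, the probability of a zero sum is $2^{-k}$, and there are about $x$ values of $r$, which is too many since $x\approx 4^k$. So I would instead use structure. Make $v_q$ for small primes $q\le z$ linearly independent; there are $\pi(z)\le k/2$ of them. For large primes $q>z$, note that $r$ has few large prime factors, and choose $v_q$ using a code or Sidon-type construction so that sums of at most $t$ large-prime vectors avoid the span differences. The counting heuristic is that the number of pairs $(a,b)$ with $ab\le x$ is about $x\log x$, and avoiding collisions requires roughly $\sqrt{x}\le 2^k$ up to the losses; the $Ck/\log k$ loss in the exponent should come from handling the $\pi(z)$ small primes. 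Making this rigorous is where I expect the real difficulty, and I would first try a randomized greedy choice combined with the alteration method, deleting bad configurations.
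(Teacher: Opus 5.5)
Your upper bound is correct and is essentially the paper's pigeonhole argument; the paper collides $2^k+1$ primes below $Ak2^k$ lying in one class mod $8$. Counting primes is also the clean way to make your fallback rigorous. The ``crude count'' $B\phi(n)/n$ is not crude: inclusion--exclusion over the divisors of $n$ carries $2^k$ error terms, comparable to $B$, so your $4^k(\log k)^2$ refinement would need the fundamental lemma of the sieve. Your reduction of the lower bound is exactly the paper's deduction from Proposition~\ref{prop2.1}: choose $v_q\in\mathbb{F}_2^k$ with $\sum_{q\mid r}v_q\neq0$ for all square-free $1<r\le x$, then realise the $v_q$ by Dirichlet, CRT and reciprocity.

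The genuine gap is that you never construct the $v_q$, and the tools you propose cannot reach the required size. A uniformly random choice makes the sums $\sum_{q\mid r}v_q$ pairwise independent and uniform. So about $x2^{-k}$ values of $r$ fail with high probability, which is enormous for $x=4^{k(1-\epsilon)}$. Greedy or alteration schemes are Gilbert--Varshamov type arguments. When $v_q$ with $q=x^{1/u}$ is assigned, it must avoid up to roughly $\rho(u-1)x^{1-1/u}$ earlier sums, so the guarantee holds only for $x\le 2^{k(1+o(1))}$. These methods stall at the naive size $2^k$ and miss precisely the square-root gain that is the content of the theorem. Your heuristic $\sqrt{x}\ll 2^k$ is the necessary (sphere-packing) condition, not a construction.

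What is needed is an algebraic, BCH-code type construction, which is what the paper does.
\begin{enumerate}
\item Biject the primes, in increasing order, with the irreducible polynomials $Q_p\neq X$ in $\mathbb{F}_2[X]$ taken in order of degree, so that $\deg Q_p=\log p/\log 2+O(1)$.
\item Let $\mathcal{Z}_p$ be the roots of $Q_p$; these are disjoint sets of nonzero elements of $\overline{\mathbb{F}}_2$.
\item Let the $j$-th coordinate of $v_p$ be $e_j(p)=\sum_{z\in\mathcal{Z}_p}z^{2j-1}\in\mathbb{F}_2$ for $1\le j\le k$.
\end{enumerate}
Suppose $\sum_{p\mid r}v_p=0$. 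Then $\mathcal{Z}=\bigcup_{p\mid r}\mathcal{Z}_p$ consists of distinct nonzero elements with power sums $s_1=s_3=\cdots=s_{2k-1}=0$. Since $s_{2i}=s_i^2$ in characteristic $2$, in fact $s_1=\cdots=s_{2k}=0$.

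If $|\mathcal{Z}|\le 2k$, Newton's identities give $j\sigma_j=0$ for all $j\le|\mathcal{Z}|$. Because $\sigma_{|\mathcal{Z}|}\neq0$, $|\mathcal{Z}|$ is even, and then $\prod_{z\in\mathcal{Z}}(X-z)$ has zero derivative. This contradicts distinctness of its roots (Lemma~\ref{newtonsIdent}). Hence $2k<|\mathcal{Z}|=\sum_{p\mid r}\deg Q_p=\log r/\log 2+O(\omega(r))$. Using $\omega(r)\ll\log r/\log\log r$ gives $r\ge 4^{k-Ck/\log k}$. Note that the $Ck/\log k$ loss comes from the $O(1)$ degree discrepancies summed over the prime factors of $r$, not from the small primes as you guessed.
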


The upper bound above is a straightforward application of the pigeonhole principle.  The surprising part is the $\Omega$-result: since the density of the set of quadratic residues is $\asymp 2^{-k}$, one might have expected $\ell(n)$ to be not much larger than $2^k$.   The construction of $n$ with large $\ell(n)$ is motivated by ideas from error correcting codes, see \cite{boseClass, hocquenghenCodes}.

Since $\omega(n)$ (the number of distinct prime factors of $n$) is bounded by $\log n/\log \log n + O(\log n/(\log \log n)^2)$ the upper bound in Theorem \ref{thm1.1} implies that 
\begin{equation} \label{1.2}
\ell(n) \le \exp\Big( \Big( \log 4 +O\Big(\frac{1}{\log \log n}\Big) \Big)\frac{\log n}{\log \log n}\Big). 
\end{equation}
Our next result furnishes large values of $\ell(n)$ in terms of $n$; we state it in a form which is more precise and allows applications to binary quadratic forms.

\begin{thm}\label{thm1.2}  Let $\Delta \ge 10$ be given.   There exists an odd square-free integer $n\le N=N(\Delta)$ such that for every fundamental discriminant $D$ with $1 < |D|\le \Delta$ one has $(\frac{D}{p})=-1$ for some prime factor $p$ of $n$.   Here the parameter $N(\Delta)$ may be chosen as 
$$\exp(C \log \Delta (\log \log \Delta)^2)$$ 
for some positive constant $C$.  If the Generalized Riemann Hypothesis is assumed, then $N(\Delta)$ may be chosen as $\exp(C\log \Delta \log \log  \Delta)$ for any $C> 2/\log 2$.  
\end{thm}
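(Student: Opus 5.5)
We build $n$ as a product of primes chosen greedily so that each new prime "kills" a large fraction of the fundamental discriminants not yet handled. Let $\mathcal{D}$ be the set of fundamental discriminants $D$ with $1<|D|\le \Delta$, so $|\mathcal{D}|\asymp \Delta$. For an odd prime $p\nmid D$ we say $p$ kills $D$ if $(\frac{D}{p})=-1$. Given a surviving set $\mathcal{S}\subseteq\mathcal{D}$, we want a prime $p\le P$ that kills at least a proportion close to $1/2$ of $\mathcal{S}$. Averaging over primes $p\in(P,2P]$,
\[
\sum_{P<p\le 2P}\ \sum_{D\in\mathcal{S}} \frac{1-(\frac{D}{p})}{2} = \frac{|\mathcal{S}|\,\pi^*(P)}{2} - \frac12\sum_{D\in\mathcal{S}}\ \sum_{P<p\le 2P}\Big(\frac{D}{p}\Big),
\]
where $\pi^*(P)$ counts the primes in $(P,2P]$. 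Since each $D\neq 1$ is fundamental, $(\frac{D}{\cdot})$ is a non-principal primitive character of conductor $|D|\le\Delta$.

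\emph{Under GRH}, $\sum_{p\le x}(\frac{D}{p})\ll \sqrt{x}\log(x\Delta)$. So with $P=(\log\Delta)^{2+\epsilon}$ the character sums are $o(\pi^*(P))$ uniformly in $D$. Hence some prime $p\le 2P$ kills at least $(\frac12-o(1))|\mathcal{S}|$ elements of $\mathcal{S}$, and we remove them. After $m$ steps, at most $\Delta(\frac12+o(1))^m$ discriminants survive, so $m=(1+o(1))\log\Delta/\log 2$ steps empty the set. The primes chosen are distinct, because a prime already used kills nothing new. Then $n$ is a product of $m$ primes each at most $(\log\Delta)^{2+\epsilon}$, giving
\[
\log n\le (1+o(1))\frac{\log \Delta}{\log 2}\,(2+\epsilon)\log\log\Delta .
\]
This is the claimed $\exp(C\log\Delta\log\log\Delta)$ for any $C>2/\log 2$. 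To get the constant sharp, we let $P$ shrink as $\mathcal{S}$ shrinks only if needed. It is cleaner to keep the $\sqrt x\log$ error and note that the $o(1)$ loss costs only a factor $1+o(1)$ in $m$.

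\emph{Unconditionally}, no uniform bound is available for every $D$. Instead we average the square: by Cauchy–Schwarz and the large sieve for real characters (Heath-Brown's quadratic large sieve, or the classical multiplicative large sieve over primes),
\[
\sum_{D\in\mathcal{S}}\Big|\sum_{P<p\le 2P}\Big(\frac{D}{p}\Big)\Big| \le |\mathcal{S}|^{1/2}\Big(\sum_{|D|\le\Delta}\Big|\sum_{p}\Big(\frac{D}{p}\Big)\Big|^2\Big)^{1/2}.
\]
The large sieve bounds the inner mean square by roughly $(\Delta+P^2)\pi^*(P)$, or by $(\Delta P)^{\epsilon}(\Delta+P)P$ via Heath-Brown. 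The step requires this to be $\le \delta|\mathcal{S}|\pi^*(P)$ for a fixed small $\delta$. Equivalently, $P$ must be large enough that $\Delta\,\pi^*(P)\ll\delta^2|\mathcal{S}|\pi^*(P)^2$ and $P^2\ll \delta^2|\mathcal S|\pi^*(P)$.

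This is the main obstacle. Early on, with $|\mathcal{S}|\asymp\Delta$, a $P$ that is polylogarithmic in $\Delta$ suffices. But once $|\mathcal{S}|$ has shrunk to $\Delta/2^j$, the requirement becomes roughly $\pi^*(P)\gg 2^j$, so $P$ must grow with $j$. In the final stages, when few $D$ remain, we handle survivors individually. Each remaining $D$ can be killed by a prime $p\ll |D|^{c}$; we use the Linnik/Vinogradov-type fact that the least quadratic non-residue of a real character is $\ll \Delta^{1/4\sqrt e+\epsilon}$, or simply $\ll\Delta$.

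The bookkeeping therefore splits into dyadic phases according to $|\mathcal{S}|$, and in each phase we take $P$ as small as the large sieve allows. The $(\log\log\Delta)^2$ arises from balancing these phases. Roughly $\log\Delta$ primes are needed in total. The largest primes, of size about $(\log\Delta)^{O(\log\log\Delta)}$, are needed only in the stage where the sieve loses, namely when $|\mathcal{S}|$ drops below about $\Delta^{1/2}$. Tuning these sizes so that $\sum\log p\ll\log\Delta(\log\log\Delta)^2$ is the delicate computation. If that balance fails, the fallback is to switch, once $|\mathcal{S}|\le\Delta/(\log\Delta)^{A}$, to choosing for each survivor a prime of size $\le(\log\Delta)^{O(\log\log \Delta)}$ that kills it. Here we use that exceptional $D$ with no small non-residue are rare, which again follows from the large sieve.
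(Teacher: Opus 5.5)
Your conditional argument is correct and is essentially the paper's. Greedily choosing a prime in $(P,2P]$, with $P=(\log\Delta)^{2+\epsilon}$, that kills $(\tfrac12-o(1))|\mathcal S|$ survivors is a derandomized form of the $k$-th moment argument in \eqref{3.1}. You should discard the $O(\log\Delta/\log P)$ primes $p\mid D$, which contribute $\tfrac12$ to $\frac{1-\chi_D(p)}{2}$ without killing $D$, but this is negligible.

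The unconditional part, however, has a genuine gap. Cauchy--Schwarz plus the large sieve controls $\sum_{P<p\le 2P}\chi_D(p)$ only on average over all $D$ in an interval of length $\asymp\Delta$, so, as you note, it needs $\pi^*(P)\gg\Delta/|\mathcal S|$. Hence the prime that takes $|\mathcal S|$ from $\Delta/2^j$ to $\Delta/2^{j+1}$ has $\log p\ge j\log 2+O(\log j)$. Merely reaching $|\mathcal S|\approx\Delta^{1/2}$ already costs $\sum_{j\le \frac12\log_2\Delta} j\log 2\asymp(\log\Delta)^2$. Within a budget of $C\log\Delta(\log\log\Delta)^2$ you only reach $|\mathcal S|\approx\Delta\exp(-c\sqrt{\log\Delta}\,\log\log\Delta)$. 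Your assertion that the largest primes needed are about $(\log\Delta)^{O(\log\log\Delta)}$ contradicts your own constraint, which forces $P\approx\Delta^{1/2}$ at that stage.

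Treating the $\gg\Delta^{1/2}$ remaining survivors one at a time, each with a prime up to $\Delta^{1/(4\sqrt e)+\epsilon}$, only gives $\log n\ll\Delta^{1/2}\log\Delta$. The fallback fails for the same reason. The large sieve does not make the $D$ without a non-residue prime below $Y=(\log\Delta)^{O(\log\log\Delta)}$ anywhere near rare enough to treat individually; its bounds are at best of size $\exp(c\log\Delta/\log\log\Delta)$. And even if every survivor had such a prime, the primes you pick could be essentially all primes up to $Y$, giving $\log n\approx Y=\exp(O((\log\log\Delta)^2))$.

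What is missing is \emph{pointwise} control. You need, for each $D$ outside a very small exceptional set, a guarantee that a positive proportion of primes at a suitable $D$-dependent scale satisfy $\chi_D(p)=-1$. The number of $D$ requiring large scales must decay fast enough, and only $O(1)$ discriminants may be left over at the top scale. Average information of large sieve type cannot give this. For sums over primes in $(P,2P]$, the bound it gives for the number of $D$ with $|\sum_{P<p\le 2P}\chi_D(p)|\ge\eta\,\pi^*(P)$ is never below about $\eta^{-2}\log P$. Your budget, by contrast, lets you treat only $O((\log\log\Delta)^2)$ discriminants individually with primes of size $\Delta^{O(1)}$.

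The paper obtains this control from a log-free zero-density estimate, in four steps. First, the discriminants are partitioned in advance into classes $\mathcal D_j$ according to how close to $1$ the zeros of $L(s,\chi_D)$ with $|t|\le\Delta$ lie, with $|\mathcal D_j|\le C_1\exp(C_2e^j)$ by \eqref{zeroDensityDj}. Second, Lemma~\ref{lem3.2}, a Rodosskii-type explicit-formula argument, turns the zero-free region for $D\in\mathcal D_j$ into the bias bound \eqref{3.3} at scale $x_j=\Delta^{j/e^{j-1}}$. Third, Lemma~\ref{lem3.1} covers each class separately with $O(e^j)$ primes in $[x_j^2,x_j^4]$, at cost $O(j\log\Delta)$. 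Fourth, the bounded class $\mathcal D_0$ is handled by $O(1)$ primes below $2\Delta$.

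Summing $O(j\log\Delta)$ over $j\le\log\log\Delta$ is where the $(\log\log\Delta)^2$ comes from. The classes are fixed a priori by a per-$D$ property, rather than produced by greedily shrinking one set and hoping for average control on an arbitrary sparse subset.
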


Since $\ell(n)$ is a square-free integer $>1$, either $\ell(n)$ or $4\ell(n)$ is a fundamental discriminant.  Therefore, if $n$ is chosen as in Theorem \ref{thm1.2} then we must have 
$$  
\ell(n) \ge \exp\Big( c \frac{\log n}{(\log \log n)^2} \Big), 
$$
for a suitable positive constant $c$.   If GRH is assumed, then the improved bound in Theorem \ref{thm1.2} permits 
$$
\ell(n) \ge \exp\Big( \Big(\frac{\log 2}{2}+ o(1)\Big) \frac{\log n}{\log \log n} \Big). 
$$
   
Theorem \ref{thm1.2} can be viewed as a variant of the classical problem in analytic number theory which for a given primitive quadratic character $\chi \bmod q$ asks for the least prime $p$ with $\chi(p)=-1$.  On GRH it is well-known (see \cite{ankenyTheLeast}) that the least such prime $p$ satisfies $p\ll (\log q)^2$.  Therefore, on GRH, the primorial $n=\prod_{p \le C(\log \Delta)^2} p$ for a suitably large constant $C$ would have the property that for each fundamental discriminant $D$ with $1<|D|\le \Delta$, some prime factor $p$ of $n$ satisfies $\chi_D(p)=-1$.  Probabilistic considerations suggest that the least prime $p$ with $\chi(p)=-1$ may lie below $C\log q (\log \log q)$.  In that case the smaller primorial $n=\prod_{p\le C \log \Delta (\log \log \Delta)} p$ would have the desired property.  Our theorem produces on GRH a square-free number of essentially this size, the key being the flexibility of not restricting to primorials; and the unconditional bound is only a little worse.  
 
\par Another motivation for this circle of problems arose from recent work of Green and Soundararajan \cite{greenCovering} which studied the threshold $\Delta$ such that the family of binary quadratic forms $x^2+ dy^2$ with $1\le d\le \Delta$ covers almost all the positive integers up to $N$.   While the problem of covering almost all integers was resolved in \cite{greenCovering}, almost nothing is known about the problem of covering {\sl all} integers below $N$.  Our work permits the following partial progress, which makes precise some results that were briefly stated in \cite{greenCovering}.   

In what follows, we will consider binary quadratic forms $f=f(x,y)=ax^2+bxy+cy^2$; such a form is called primitive if $\gcd(a,b,c)=1$.  The discriminant of such a form is $D_f = b^2-4ac$.  When $D_f<0$ the quadratic form is definite, and we shall assume then that $a>0$ so that the form is positive definite.   When $D_f>0$, the associated form is indefinite, and we shall assume that $D_f$ is not a perfect square (in which case the form is degenerate and factors into two linear forms).    

\begin{theorem} \label{thm1.3}  Let $N$ be large. 

1.   Every positive integer up to $N$ may be represented by some primitive, non-degenerate, indefinite binary quadratic form $f$ with discriminant $1< D_f \le \Delta$ with 
$$ 
\Delta = \exp\Big( (\log 4 +o(1)) \frac{\log N}{\log \log N}\Big). 
$$

2.  Assuming GRH, every positive integer up to $N$ may be represented by a primitive, positive definite binary quadratic form $f$ with discriminant $|D_f| \le \Delta$ with $\Delta$ as in part (1) above. 

3.  There exists a positive integer $n \le N$ that cannot be represented by any primitive non-degenerate binary quadratic form $f$ with $|D_f|\le \Delta$.  Here $\Delta$ may be taken as $\exp(c \log N/(\log \log N)^2)$ for some positive constant $c$.  Conditional on GRH, $\Delta$ may be taken as $\exp((\frac{\log 2}{2}-\epsilon) \log N/(\log \log N))$. 
\end{theorem}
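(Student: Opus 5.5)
We treat the three parts separately. Each rests on the same classical fact: a positive integer $m$ is represented by some primitive form of discriminant $D$ if and only if $D$ is a square modulo $4m$, with a suitable compatibility condition on the common factors of $D$ and $m$.

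\textbf{Part 1.} The plan is to produce, for each $m\le N$, a non-square discriminant $1<D\le \Delta$ with $D\equiv x^2 \bmod 4m$. We first reduce to the part of $m$ coprime to a small set, and then apply the upper bound in Theorem \ref{thm1.1} to the modulus $n=4m$ (or $8m$). That theorem gives a square-free $r>1$ with $(r,4m)=1$, with $r\equiv x^2 \bmod 4m$, and with
$$r\le Ck^24^k=\exp\Big((\log 4+o(1))\frac{\log N}{\log\log N}\Big),$$
as in \eqref{1.2}. Since $r\equiv x^2\bmod 4$ and $r$ is coprime to $4m$, we get $r\equiv 1\bmod 4$, so $D=r$ is a non-square discriminant, indeed a fundamental one. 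We then take $b\equiv x$ with $b^2\equiv D \bmod 4m$ and $c=(b^2-D)/(4m)$. The form $mX^2+bXY+cY^2$ has discriminant $D$ and represents $m$ at $(1,0)$. It is primitive because $\gcd(m,b,c)^2$ divides $D$, which is square-free. It is indefinite and non-degenerate because $D>1$ is not a square. The only care needed is to run the residue condition modulo $2^3$ correctly when $m$ is even; the reduction in the introduction handles this by passing to $n=8m'$.

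\textbf{Part 2.} For definite forms we need a negative discriminant $-D$ with $-D$ a square mod $4m$, and $D\le\Delta$. The pigeonhole argument behind Theorem \ref{thm1.1} produces only positive residues. We therefore redo it for the coset of $-1$ among the characters: we need a square-free $r$ with $(\frac{-r}{p})=1$ for each $p\mid m$, together with the right congruence mod $8$. This set is no longer a subgroup but a coset, and it may not be hit by products of small primes through pigeonhole alone. This is where GRH enters. We would use GRH to find primes $q\le(\log N)^{A}$ realizing any prescribed sign pattern of the characters $(\frac{\cdot}{p})$, $p\mid m$, restricted to the relevant class mod $8$. Equidistribution of small primes among the $2^{k}$ classes of the character group, via GRH-quality error terms for $L(s,\chi)$ with $\chi$ of conductor at most $m$, guarantees the coset is met. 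We then take a product of a few such primes times a pigeonhole product, keeping the total size $\exp((\log4+o(1))\log N/\log\log N)$. I expect this to be the main obstacle: making sure the coset is reached without losing more than a factor $\exp(o(\log N/\log\log N))$.

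\textbf{Part 3.} Apply Theorem \ref{thm1.2} with $\Delta$ chosen so that $N(\Delta)\le N$. Unconditionally this gives $\log\Delta\asymp \log N/(\log\log N)^2$; on GRH, $N(\Delta)=\exp(C\log\Delta\log\log\Delta)$ with $C$ just above $2/\log 2$ gives $\Delta=\exp((\frac{\log2}{2}-\epsilon)\log N/\log\log N)$. Let $n\le N$ be the resulting odd square-free integer.

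Suppose $n$ were represented by a primitive form $f$ of discriminant $D_f$ with $|D_f|\le\Delta$. Write $D_f=D f_0^2$ with $D$ fundamental or $D=1$. The representation forces $D_f\equiv b^2\bmod 4n$ for a suitable $b$, so $(\frac{D}{p})\ne -1$ for every $p\mid n$ with $p\nmid f_0$.

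For primes $p\mid f_0$ we must argue differently, since primitivity alone does not immediately exclude $(\frac{D}{p})=-1$. The approach is to use the standard lifting of representations between orders: if $p\mid f_0$ and $p\mid n$ exactly, then a primitive form of discriminant $Df_0^2$ represents $n$ only if $(\frac{D}{p})\neq -1$. This is the one point in Part 3 that needs checking. It can be handled by replacing $\Delta$ with $\Delta$ itself and noting that Theorem \ref{thm1.2} covers all fundamental $D$ with $|D|\le\Delta$, which includes $D$ even when $f_0>1$.

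Since $|D|\le\Delta$, Theorem \ref{thm1.2} supplies a prime $p\mid n$ with $(\frac{D}{p})=-1$, a contradiction. The case $D=1$ is excluded by non-degeneracy.
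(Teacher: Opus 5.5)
Part~1 is the paper's argument: apply Theorem~\ref{thm1.1} to the modulus $4m$, and $r=\ell(4m)$ is a fundamental discriminant that is a square mod $4m$. Writing down $mX^2+bXY+cY^2$ is a fine substitute for citing the classical criterion, and no extra care with the $2$-part is needed.

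\textbf{Part~2 has a genuine gap.} Your target size is right, but the mechanism cannot reach it.
\begin{itemize}
\item The step ``use GRH to find primes $q\le(\log N)^A$ realizing any prescribed sign pattern'' is false. Here $k=\omega(m)$ can be as large as $(1+o(1))\log N/\log\log N$, so there are $2^k=\exp((\log 2+o(1))\log N/\log\log N)$ patterns. That is far more than the number of primes below any fixed power of $\log N$.
\item Even the single pattern you need, $\big(\frac{q}{p}\big)=\big(\frac{-1}{p}\big)$ for all $p\mid m$, has relative density $2^{-k}$. The GRH argument only guarantees a hit once the main term $x2^{-k}$ beats the error $\sqrt{x}(\log mx)^2$, that is, for $x\gg 4^k(\log mx)^4$.
\item Multiplying by a pigeonhole product $q_1q_2$ cannot help. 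It has trivial sign pattern, so it never moves you into the nontrivial coset.
\end{itemize}
What works, and what the paper does, is to apply GRH directly at scale $4^k$. First reduce to square-free $n=2^am$, since a form representing $m$ also represents $mt^2$. Restrict to primes $\ell\equiv 7\bmod 8$, so that $-\ell\equiv 1\bmod 8$ handles the $2$-part. Detect ``$-\ell$ is a square mod $m$'' by $\sum_{d\mid m}\big(\frac{-\ell}{d}\big)$. GRH then gives
\[
\sum_{\substack{\ell\le\Delta\\ \ell\equiv 7 \bmod 8}}\log\ell\sum_{d\mid m}\Big(\frac{-\ell}{d}\Big)=\tfrac14\Delta+O\big(2^{\omega(n)}\sqrt{\Delta}(\log n\Delta)^2\big),
\]
which is positive once $\Delta\ge C4^{\omega(n)}(\log n)^4$. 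The constant $\log 4$ comes from this square-root loss over $2^{\omega(n)}$ characters, not from any pigeonhole step.

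\textbf{Part~3 leaves the primes $p\mid f_0$ unhandled.} Your parameter choices are correct. But you assert a ``lifting between orders'' fact and flag it as needing a check. The closing remark that Theorem~\ref{thm1.2} covers all fundamental $D$ with $|D|\le\Delta$ only explains why $|D|\le\Delta$. It does not show $\big(\frac{D}{p}\big)\neq -1$ for $p\mid f_0$.

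The paper closes this with the classical reduction:
\begin{itemize}
\item A primitive form $f$ of discriminant $Dt^2$ equals $\tilde f(T(x,y))$, where $\tilde f$ has discriminant $D$ and $T$ is an integral linear map.
\item So $n=f(x,y)$ gives a representation $n=\tilde f(X,Y)$.
\item Since $n$ is square-free, this representation is proper, so $D$ itself is a square mod $4n$.
\item This contradicts $\chi_D(p)=-1$ for some $p\mid n$.
\end{itemize}
This treats every prime of $n$ at once, whether or not it divides $f_0$.
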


\par Lastly, we draw attention to the work of Hanson, Vaughan, and Zhang \cite{vaughanLeast} which studies a related question of determining the least $N = N(p_1 \dots p_k)$ for which all sequences of signs of length $k$ appear as $((\frac{r}{p_1}), \dots, (\frac{r}{p_k}))$ for some $r \le N$, $\gcd(r, p_1 \dots p_k) = 1$, where $p_1, \dots, p_k$ are distinct odd primes.

\section{Proof of Theorem \ref{thm1.1}}\label{omega_Bounds}

\noindent We begin with the upper bound in Theorem \ref{thm1.1} which is a straightforward application of the pigeonhole principle.  Suppose $p_1$, $\ldots$, $p_k$ are the distinct odd primes dividing $n$.  We seek a suitably small positive integer $r$ with $r\equiv 1 \bmod 8$ and $(\frac{r}{p_j})=1$ for all $1\le j\le k$; such an integer $r$ will be a quadratic residue $\bmod \ n$. 

By the prime number theorem it follows that for a suitably large absolute constant $A$, there are at least $4 \times 2^k + k+1$ odd primes below $Ak 2^k$.  It follows that some progression $a \bmod 8$ contains at least $2^k +1$ primes below $Ak 2^k$ with these primes all being different from the $k$ odd primes dividing $n$.  By the pigeonhole principle we may find two such primes $q_1$ and $q_2$ with $(\frac{q_1}{p_j}) = (\frac{q_2}{p_j})$ for all $1\le j\le k$.   Then $r=q_1 q_2$ is $1\bmod 8$ and satisfies $(\frac{r}{p_j})=1$ for all $1\le j\le k$.  Since $r=q_1q_2$ is less  than $(Ak 2^k)^2$, the upper bound in Theorem \ref{thm1.1} follows.

The lower bound in Theorem \ref{thm1.1} is more involved, and will follow from the following proposition.  

\begin{prop} \label{prop2.1}  For each integer $k\ge 2$ there exist completely multiplicative functions $f_1$, $\ldots$, $f_k : {\mathbb N} \to \{-1,1\}$ such that the least square-free integer $r>1$ with $f_j(r)=1$ for all $1\le j\le k$ satisfies 
$$ 
r \ge 4^{k-Ck/\log k}, 
$$
for a suitable absolute constant $C$. 
\end{prop}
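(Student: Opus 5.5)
Identify each completely multiplicative $f:\mathbb N\to\{-1,1\}$ with its values on primes, so a $k$-tuple $(f_1,\dots,f_k)$ amounts to a map $p\mapsto v(p)\in\mathbb F_2^k$, where $f_j(p)=(-1)^{v_j(p)}$. A square-free $r=p_1\cdots p_s>1$ has $f_j(r)=1$ for all $j$ exactly when $v(p_1)+\cdots+v(p_s)=0$. Set $X=4^{k-Ck/\log k}$. It then suffices to find vectors $v(p)$ for the primes $p\le X$ such that no nonempty set $S$ of primes with $\prod_{p\in S}p\le X$ has $\sum_{p\in S}v(p)=0$. This is a weighted version of a parity-check matrix with large minimum distance, where prime $p$ carries weight $\log p$, and I would build it from BCH codes as the introduction suggests.

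The basic tool is the BCH property. Put $v(p)=(\alpha_p,\alpha_p^3,\dots,\alpha_p^{2t-1})\in\mathbb F_{2^m}^{t}\cong\mathbb F_2^{mt}$ with distinct nonzero $\alpha_p$. Then any $2t$ of these columns are linearly independent. This follows from the Vandermonde/Newton-identity argument: in characteristic $2$ the odd power sums determine the even ones, since $\sum\alpha^{2i}=(\sum\alpha^i)^2$. The cost is $mt$ coordinates to handle all sets of at most $2t$ primes among roughly $2^m$ primes.

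I would not use a single code, since that wastes coordinates: $r\le X$ can have up to $\log X/\log\log X$ prime factors, but only few large ones. Instead I would split the primes into size classes $P_i=\{p: 2^{m_{i-1}}<p\le 2^{m_i}\}$, with the $m_i$ growing geometrically (say $m_i=\lfloor m_0\lambda^i\rfloor$) up to $\log_2 X$. Class $P_i$ gets its own block of coordinates: a BCH block over $\mathbb F_{2^{m_i}}$, with $t_i$ chosen so that $2t_i$ exceeds the maximal number $\log X/m_{i-1}$ of primes from $P_i$ that can divide some $r\le X$. The total length of this block is about $m_i\log_2X/(2m_{i-1})\approx(\lambda/2)\log_2 X$ per class.

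The blocks are then combined by a triangular scheme rather than by concatenation, because concatenation costs a factor equal to the number of classes. The primes of the top class use all $k$ coordinates, lower classes reuse coordinates, and a dependency must be nonzero in the block of the largest class it involves. The aim is to make the coordinate count telescope to $(\tfrac12+O(1/\log k))\log_2X$, which is exactly the inequality $X\ge 4^{k-Ck/\log k}$.

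The verification has two steps. First, if $\sum_{p\in S}v(p)=0$, take the largest class $i$ meeting $S$. The component in block $i$ then forces $|S\cap P_i|>2t_i$ by the BCH property, and hence $\prod_{p\in S}p>X$. Second, the very small primes, with $m_i\lesssim\log k$, are handled by giving them independent coordinates, which costs only $O(k/\log k)$.

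The main obstacle is the bookkeeping in the triangular combination. With naive concatenation the block lengths add to roughly $\tfrac12\log_2X$ times the number of classes, which is far too many. The plan to get around this is to let the top classes, where a dependency contains at most about $\log X/m$ primes of size about $2^m$, carry the full length $\approx\tfrac12\log_2X$, and then exploit that $\sum_{p\in S}\log_2 p\le\log_2X$ across all classes simultaneously. Concretely, I would try a single BCH-type code over one field $\mathbb F_{2^m}$ with $m\approx\log_2 X$, assigning prime $p$ the element $\alpha_p$ but only the first $\approx\log_2 p/2$ coordinates' worth of syndromes. This mimics $\mathbb F_2[x]$, where a squarefree product of irreducibles of total degree below $2k$ cannot be a square modulo a degree-$k$ modulus in the analogous sense. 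Controlling the $O(k/\log k)$ loss in that step is where I expect the difficulty to lie.
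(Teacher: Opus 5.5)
\textbf{There is a genuine gap: the vectors $v(p)$ are never actually constructed.}

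Your reduction is correct: you need vectors $v(p)\in\mathbb{F}_2^k$ such that no nonempty set $S$ of primes with $\prod_{p\in S}p\le X$ has $\sum_{p\in S}v(p)=0$. You have also identified the right tool, namely odd power sums in characteristic $2$ together with Newton's identities. But neither scheme you sketch works as stated.

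\emph{The size-class scheme.} Your verification says the component in block $i$ forces $|S\cap P_i|>2t_i$. That is valid only if no lower class contributes to block $i$, i.e.\ for plain concatenation. You have already computed that concatenation costs a factor equal to the number of classes. Once lower classes reuse coordinates of block $i$, the block-$i$ component of $\sum_{p\in S}v(p)$ mixes in lower-class terms. The BCH property of class $i$ alone then says nothing.

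\emph{The single-field fallback.} Take one field $\mathbb{F}_{2^m}$ with $m\approx\log_2 X\approx 2k$. A single syndrome $\alpha_p^{2j-1}\in\mathbb{F}_{2^m}$ already costs $m\approx 2k>k$ bits. Keeping only some of its $\mathbb{F}_2$-coordinates does not, in general, preserve the Newton-identity argument, which works with full field elements.

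\textbf{The missing idea.} Let each prime carry a whole Galois orbit rather than a single element of a fixed field, and extract one bit per function by a trace.
\begin{itemize}
\item Biject the primes, in increasing order, with the irreducible polynomials $Q_p\in\mathbb{F}_2[x]$ other than $x$, taken in order of increasing degree. Then $\deg Q_p=\log p/\log 2+O(1)$.
\item Let $\mathcal{Z}_p\subset\overline{\mathbb{F}}_2^\times$ be the roots of $Q_p$. Set $e_j(p)=\sum_{z\in\mathcal{Z}_p}z^{2j-1}$. This lies in $\mathbb{F}_2$, since it is the trace of $\alpha_p^{2j-1}$ from $\mathbb{F}_2(\alpha_p)$ down to $\mathbb{F}_2$.
\item Define $f_j$ on primes by $f_j(p)=(-1)^{e_j(p)}$.
\end{itemize}
The sets $\mathcal{Z}_p$ are pairwise disjoint, and each consists of distinct nonzero elements. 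Suppose $r$ is square-free and $f_j(r)=1$ for all $j\le k$. Then $\mathcal{Z}=\bigcup_{p\mid r}\mathcal{Z}_p$ is a set of distinct nonzero elements whose first $k$ odd power sums vanish. The lemma over $\overline{\mathbb{F}}_2$ gives
\[
2k<|\mathcal{Z}|=\sum_{p\mid r}\deg Q_p=\frac{\log r}{\log 2}+O(\omega(r)).
\]

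This construction handles every size scale at once, with no classes or bookkeeping. Each prime occupies about $\log_2 p$ field elements, matching its weight, yet contributes only $k$ bits. The $Ck/\log k$ loss is exactly the error term $O(\omega(r))=O(\log r/\log\log r)$.

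Your closing remark about $\mathbb{F}_2[x]$ points in this direction, but without the orbit/trace step the argument is incomplete.
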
 

Before proving the proposition, we show how the lower bound in Theorem \ref{thm1.1} may be deduced. Let $f_1$, $\ldots$, $f_k$ be completely multiplicative functions as in the proposition.   By Dirichlet's theorem on primes in progressions, for each $1\le j\le k$ we may find arbitrarily large primes $q_j$ with $(\frac {p}{q_j}) = f_j(p)$ for all primes $p <4^k$ (since quadratic reciprocity and the Chinese remainder theorem enable us to express these conditions in terms of $q_j$ lying in suitable reduced residue classes $\bmod\ {4\prod_{p< 4^k} p})$.   Now take $n=q_1 \cdots q_k$, and note that if $r =\ell(n)$ is $<4^k$ then we must have $f_j(r)=1$ for all $1\le j\le k$.  The proposition gives that $r\ge 4^{k-Ck/\log k}$, completing our proof.  

We now turn to the proof of the proposition, first recording a simple but key lemma.

\begin{lemma}\label{newtonsIdent} 
    Let $k$ be a natural number and let ${\mathbb F}_2$ denote the field with two elements, and $\overline{\mathbb F}_2$ its algebraic closure.  Suppose $z_1$, $\ldots$, $z_n$ are distinct non-zero elements of $\overline{{\mathbb F}}_2$ with 
    $$ 
    z_1^{2j-1} + z_2^{2j-1} + \ldots + z_n^{2j-1} =0
    $$
    for all $1\le j\le k$.   Then we must have $n> 2k$.  
\end{lemma}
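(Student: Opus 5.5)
The plan is to use Newton's identities in characteristic $2$ to pass from odd power sums to the elementary symmetric functions. Let $p_m=\sum_{i=1}^n z_i^m$ denote the power sums, and let $e_m$ be the elementary symmetric polynomials of $z_1,\ldots,z_n$. Over ${\mathbb F}_2$ the Frobenius gives $p_{2m}=p_m^2$, so the hypothesis $p_1=p_3=\cdots=p_{2k-1}=0$ forces $p_m=0$ for every $1\le m\le 2k$: write $m=2^a m'$ with $m'$ odd, so that $p_m=p_{m'}^{2^a}=0$.

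Next I will apply Newton's identities
$$
p_m - e_1 p_{m-1} + e_2 p_{m-2} - \cdots + (-1)^{m-1} e_{m-1}p_1 + (-1)^m m\, e_m = 0.
$$
Since $p_1=\cdots=p_{2k}=0$, these reduce to $m e_m=0$ for $m\le 2k$, and in characteristic $2$ this yields $e_m=0$ for odd $m\le 2k$. That alone is not enough, because the even $e_m$ escape. So I plan to use a different identity instead, the generating-function form. Set $P(x)=\prod_{i}(1-z_i x)=\sum_m (-1)^m e_m x^m$. Then
$$
-x\,\frac{P'(x)}{P(x)}=\sum_{m\ge1} p_m x^m \equiv 0 \pmod{x^{2k+1}}.
$$
Hence $xP'(x)\equiv 0 \bmod x^{2k+1}$ in $\overline{\mathbb F}_2[[x]]$, because $P$ is a unit. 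In characteristic $2$, $P'$ only picks out the odd-degree coefficients, so $e_m=0$ for all odd $m\le 2k$. Equivalently, up to degree $2k$, $P(x)$ agrees with a polynomial in $x^2$, namely the square of a polynomial $Q$.

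The key step, and the main obstacle, is converting this into the bound on $n$. If $n\le 2k$, then $P$ has degree $n\le 2k$, so the congruence modulo $x^{2k+1}$ determines $P$ entirely. It follows that $P'(x)=0$ identically, so $P(x)=Q(x)^2$ with $Q\in\overline{\mathbb F}_2[x]$, using perfectness of $\overline{\mathbb F}_2$ to take square roots of the coefficients. Then every root of $P$ has even multiplicity. But the roots of $P$ are the $z_i^{-1}$, which are well defined since the $z_i$ are nonzero and distinct, so $P$ is squarefree of degree $n$. Since $P(0)=1$ and $P$ is not constant when $n\ge1$, this is a contradiction. The case $n=0$ is excluded, or considered trivially to have $n>2k$ false; I will note that the lemma implicitly requires $n\ge1$, since otherwise the empty sum is zero. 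Hence $n>2k$.
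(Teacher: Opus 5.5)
Your proof is correct and is essentially the paper's argument. Frobenius gives $s_1=\cdots=s_{2k}=0$, Newton's identities then kill the odd elementary symmetric functions, and for $n\le 2k$ this forces the derivative of the polynomial with roots $z_i$ (in your version, $z_i^{-1}$) to vanish identically, contradicting distinctness. Working with $\prod_i(1-z_ix)$ just spares you the paper's preliminary step of showing $n$ is even. Your worry that ``the even $e_m$ escape'' was unfounded: the generating-function identity carries exactly the same information as Newton's identities, and the vanishing of the odd $e_m$ is all that is needed. Your remark that $n\ge 1$ must be assumed is right; the paper's proof tacitly needs it too, and it holds in the application since $r\ge 2$.
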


\begin{proof}
    For $j \in \mathbb{N}$, set
    \[s_j := z_1^j + \dots + z_n^j, \quad \sigma_j := \sum_{i_1 < \dots < i_j} z_{i_1} \dots z_{i_j}\]
    and let $\sigma_0 = 1$. By assumption $s_1, s_3, \dots, s_{2k-1} = 0$, and since $(x_1 + \dots + x_n)^2 = x_1^2 + \dots + x_n^2$ in characteristic $2$, we have in fact $s_1, s_2, \dots, s_{2k} = 0$.

Recall Newton's identity, which gives for all $j\le n$
    \[
    j \sigma_j = \sum_{i=1}^j (-1)^{i-1}\sigma_{j-i}s_i. 
    \]
    If $n\le 2k$ we conclude that $j\sigma_j =0$ for all $j\le n$. Note that $\sigma_n = z_1\ldots z_n \neq 0$, and thus $n$ must be even.   Consider the polynomial 
    $$ 
    p(x) = \prod_{j=1}^{n} (x-z_j) = \sum_{j=0}^{n} \sigma_j x^{n-j} \in {\overline{\mathbb F}}_2[x],
    $$
    whose derivative satisfies (since $n$ is even)
$$
p^{\prime}(x) = \sum_{j=0}^{n-1} (n-j) \sigma_j x^{n-j-1} = 
\sum_{j=0}^{n-1} j\sigma_j x^{n-j-1} = 0.
$$
This contradicts the assumption that the roots $z_1$, $\ldots$, $z_n$ are distinct and therefore we must have $n>2k$. 
\end{proof}

\begin{proof}[Proof of Proposition \ref{prop2.1}]   We first select a bijection between the primes and irreducible polynomials in ${\mathbb F}_2[x]$ apart from the polynomial $x$ as follows.  Running over the primes in ascending order, associate to $p$ an irreducible polynomial of smallest degree in ${\mathbb F}_2[x]$ that has not already been used (and omitting $x$).  For instance, associate $2$ to the polynomial $x+1$, $3$ to $x^2+x+1$, $5$, $7$ and $11$ to the three irreducible polynomials of degree $3$ (in any way), and so on.  Let $Q_p$ denote the irreducible polynomial associated to $p$.  Since $\pi(x) \sim x/\log x$, and the number of irreducibles in ${\mathbb F}_2[x]$ of degree $d$ is $\sim 2^d/d$, we see that 
$$ 
\text{deg}(Q_p) = \frac{\log p}{\log 2} + O(1). 
$$
Let ${\mathcal Z}_p$ denote the set of roots of $Q_p$ in $\overline{\mathbb F}_2$, and note that, since the irreducible $x$ has been omitted, the different sets ${\mathcal Z}_p$ are disjoint subsets of non-zero elements of $\overline{\mathbb F}_2$.  

For each natural number $j$, and prime number $p$, define 
$$
e_j(p) = \sum_{z\in {\mathcal Z}_p} z^{2j-1},  
$$
which is an element of ${\mathbb F}_2$.   Define a completely multiplicative function $f_j$ by setting on primes $p$
$$ 
f_j(p) = (-1)^{e_j(p)}.  
$$
We claim that the proposition holds for this choice of completely multiplicative functions.  Suppose that $r\ge 2$ is a square-free number with $f_j(r)=1$ for all $1\le j\le k$.  Put ${\mathcal Z} =\cup_{p|r} {\mathcal Z}_p$, and note that by construction 
$$
\sum_{z\in \mathcal{Z}} z^{2j-1} = \sum_{p|r} \sum_{z\in \mathcal{Z}_p} z^{2j-1} = \sum_{p|r} e_j(p) = 0, 
$$
for all $1\le j\le k$.   Therefore, by Lemma \ref{newtonsIdent} we obtain 
$$ 
2k < |{\mathcal Z}| = \sum_{p|r} \text{deg} (Q_p) = \sum_{p|r} \Big(\frac{\log p}{\log 2} + O(1)\Big) = \frac{\log r}{\log 2} + O(\omega(r)),  
$$
where $\omega(r)$ denotes the number of distinct prime factors of $r$.  Since $\omega(r) =O(\log r/\log \log r)$, it follows that 
$$ 
\log r \ge k\log 4 - Ck/\log k,
$$
for a suitable absolute constant $C$, which completes our proof. 
\end{proof}

We remark that the multiplicative functions $f_j$ constructed above are related to characters on ${\mathbb F}_2[x]$ known as Hayes short interval characters.   For instance, for any non-zero polynomial $M \in {\mathbb F}_2[x]$ with $M(x) = x^n + a_1 x^{n-1} +\ldots$, the function $\chi_1(M) = (-1)^{a_1}$ is an example of a Hayes short interval character, and our function $f_1$ is related to $\chi_1$ upon making a suitable identification between the positive integers and the polynomials in ${\mathbb F}_2[x]$ that are not multiples of $x$.  For expositions on this subject, see \cite{gorodetskyCorrelation,hayesDistribution}.

\section{Proof of Theorem \ref{thm1.2}}

\noindent Throughout we shall assume that $|D|>1$ so that the trivial character is always omitted.  We begin with the result conditional on GRH.   Let $D$ denote a fundamental discriminant with $|D|\le \Delta$, and put $x=(\log \Delta)^2 (\log \log \Delta)^4$.   Let $k=\lceil \log \Delta( \frac{1}{\log 2} + \frac{A}{\log \log \Delta}) \rceil$ for a suitably large constant $A$.  Suppose that for all distinct choices of primes $p_1$, $\ldots$, $p_k$ in $[x,2x]$ there exists some fundamental discriminant $D$ with $|D|\le \Delta$ and $\chi_D(p_j)=0$ or $1$ for all $1\le j\le k$.   If $A$ is suitably large, we will obtain a contradiction to this assumption.  This implies that for some choice of distinct primes $p_1$, $\ldots$, $p_k$ in $[x,2x]$ one has for any $D$ with $|D|\le \Delta$ there exists some $j$ with $\chi_D(p_j)=-1$.   Then taking $n=p_1 \cdots p_k$ which is below $N(\Delta) = (2x)^k$,  we obtain the conditional part of the theorem.   

Our assumption shows that for distinct $p_1$, $\ldots$, $p_k$ in $[x,2x]$ we have 
$$ 
\sum_{1<|D|\le \Delta} \prod_{j=1}^{k} \Big(\frac{1+\chi_D(p_j)}{2} \Big) \ge 2^{-\log \Delta/\log x}, 
$$
since there are at most $\log \Delta/\log x$ primes larger than $x$ that may divide $D$ (these being the possibilities for $\chi_D(p_j)=0$). 
Since $\prod_{j=1}^{k} (\frac{1+\chi_D(p_j)}{2})$ is always non-negative, we conclude that 
\begin{align}
\label{3.1}
\sum_{1<|D|\le \Delta} \Big( \sum_{x<p\le 2x} \frac{1+\chi_D(p)}{2} \log p\Big)^k &= \sum_{x < p_1, \ldots, p_k \le 2x} \log p_1 \cdots \log p_k \sum_{1<|D|\le \Delta} \prod_{j=1}^k \Big(\frac{1+\chi_D(p_j)}{2}\Big) \nonumber\\
&\ge 2^{-\log \Delta/\log x} \sum_{\substack{ x< p_1, \ldots, p_k \le 2x \\ \text{distinct} }} \log p_1 \cdots \log p_k \nonumber \\ 
&\ge 2^{-\log \Delta/\log x}
\Big( x+ O\Big(\frac {x}{\log x}\Big)-k \log (2x)\Big)^k.
\end{align}

On the other hand, note that GRH gives (see, for example, Theorem 13.7 of \cite{montgomeryVaughan})
$$ 
\sum_{x< p\le 2x} \frac{1+\chi_D(p)}{2} \log p = \frac{x}{2} + O(x^{\frac 12} (\log x)\log \Delta)=x \Big(\frac 12+O\Big(\frac{1}{\log \log \Delta}\Big)\Big). 
$$
Therefore, the left side of \eqref{3.1} is 
\begin{equation*}
 |\{1<|D|\le \Delta\}| x^k \Big(\frac 12 + O\Big(\frac{1}{\log \log \Delta}\Big)\Big)^k \le 2\Delta \ x^k \Big(\frac 12 + O\Big(\frac{1}{\log \log \Delta}\Big)\Big)^k.
\end{equation*}
This contradicts \eqref{3.1} for suitably large $A$, and we conclude the conditional part of the theorem.  

The unconditional result is based on a similar idea, but the argument is more involved as we must replace the good bound for character sums over primes furnished by GRH with a substitute based on zero density estimates.   Below, it will be convenient to define the following weight function (for positive real numbers $t$) 
\begin{equation} 
\label{3.2}
w(t) = \begin{cases} 
0 &\text{if } t<2 \text{ or } t \ge 4 \\
(t-2) &\text{if } 2 \le t < 3\\ 
(4-t) &\text{if } 3\le t <4. \\
\end{cases}
\end{equation}
We begin with a lemma that distills the conditional argument above.  

\begin{lemma} \label{lem3.1}  Let ${\mathcal D}$ be a non-empty set of fundamental discriminants $D$ with $1 <|D|\le \Delta$.   Let $x\ge \log \Delta$ be such that for all $D\in {\mathcal D}$
\begin{equation}\label{3.3}
\sum_{p} \frac{\chi_D(p)}{p} \log p\  w\Big(\frac{\log p}{\log x} \Big) 
\le \frac 14 \sum_p \frac{\log p}{p} w \Big(\frac{\log p}{\log x}\Big). 
\end{equation}
Let $k= \lceil 5 \log |{\mathcal D}| + 5 \log \Delta/\log x\rceil$.   There exists an integer $n=p_1 \cdots p_k$ where the $p_j$ are distinct primes in $[x^2,x^4]$ such that for all $D\in {\mathcal D}$ there exists some $1\le j\le k$ with $\chi_D(p_j)=-1$. 
\end{lemma}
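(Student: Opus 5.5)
The plan is a first-moment (probabilistic) argument, which is a smoothed version of the conditional argument given above. Write $\mu(p) = \frac{\log p}{p} w\big(\frac{\log p}{\log x}\big)$ and $S = \sum_p \mu(p)$. Since $w$ is supported on $[2,4)$, $\mu$ is supported on primes in $[x^2,x^4]$. By the prime number theorem (partial summation with $\int w = 1$), $S \asymp \log x$, and in particular $S>0$. Choose primes $p_1,\ldots,p_k$ independently at random, each with $\mathbb{P}(p_j = p) = \mu(p)/S$. For a fixed $D\in\mathcal{D}$, the bad event is that no $j$ has $\chi_D(p_j)=-1$. By independence its probability is
\[
\Big(\sum_p \frac{\mu(p)}{S}\, \mathbf{1}[\chi_D(p)\ne -1]\Big)^k .
\]

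To bound the inner sum, write $\mathbf{1}[\chi_D(p)\neq -1] = \frac{1+\chi_D(p)}{2} + \frac12 \mathbf{1}[p\mid D]$. By hypothesis \eqref{3.3}, the first part contributes at most $\frac12 + \frac18 = \frac58$. For the second part, there are at most $\log\Delta/(2\log x)$ primes $p\mid D$ with $p\ge x^2$. Each has $\mu(p)\le 4\log x/x^2$, so the second part contributes
\[
\ll \frac{\log \Delta}{\log x}\cdot\frac{\log x}{x^2 S} \ll \frac{\log\Delta}{x^2\log x},
\]
which is negligible because $x\ge\log\Delta$. More crudely, one can bound this contribution by something like $\frac{1}{8}\cdot\frac{\log\Delta}{\log x}$-dependent slack, which is where the term $5\log\Delta/\log x$ in $k$ provides room. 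Either way, the bad event for $D$ has probability at most $(5/8+\epsilon)^k$ with, say, $\epsilon\le 1/40$, so at most $(2/3)^k$.

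A union bound over $D\in\mathcal{D}$ bounds the probability that some $D$ is bad by $|\mathcal{D}|(2/3)^k$. Since $k\ge 5\log|\mathcal{D}|$ and $5\log(3/2)>1$, this is $<1$ whenever $|\mathcal{D}|\ge 2$; when $|\mathcal{D}|=1$ the requirement is only that the single probability $(2/3)^k$ is less than $1$, which holds for $k\ge1$. The term $5\log\Delta/\log x$ ensures $k\ge 1$ and absorbs the $p\mid D$ correction. Hence some choice $p_1,\ldots,p_k$ in $[x^2,x^4]$ works for every $D\in\mathcal{D}$.

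The remaining point, and the only real obstacle, is distinctness: the random primes may repeat. I would handle this by deduplicating rather than estimating collision probabilities, since a naive collision bound $k^2\max\mu/S$ need not be small when $x$ is as small as $\log\Delta$. The property ``for every $D$ some $p_j$ has $\chi_D(p_j)=-1$'' depends only on the set $\{p_j\}$. So replace the list by its set of distinct elements, and then add arbitrary further distinct primes from $[x^2,x^4]$ (there are far more than $k$ of them) until there are exactly $k$. Adding primes preserves the property, which yields the desired $n=p_1\cdots p_k$.
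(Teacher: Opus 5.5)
Your proof is correct and is essentially the paper's argument in probabilistic form: the paper expands $\sum_{D\in\mathcal D}\big(\sum_p \frac{1+\chi_D(p)}{2}\frac{\log p}{p}w(\frac{\log p}{\log x})\big)^k$ over $\mu$-weighted $k$-tuples of primes and compares the result with $|\mathcal D|$. That is the same computation as your first-moment/union bound, with \eqref{3.3} supplying the base $\frac58$.

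The differences are only in the bookkeeping. The paper pays for primes dividing $D$ (where $\frac{1+\chi_D(p)}{2}=\frac12$) with a factor $2^{-\log\Delta/\log(x^2)}$, which is what the $5\log\Delta/\log x$ term in $k$ absorbs. It handles distinctness by restricting the lower bound to distinct tuples, at a cost of $k\log(x^2)/x^2=o(1)$ in the base. Your exact indicator instead shows the ramified contribution is negligible, and your deduplicate-and-pad step avoids collision estimates altogether.
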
 
\begin{proof} If the conclusion of the lemma does not hold, then arguing exactly as in \eqref{3.1} we obtain 
$$ 
\sum_{D\in {\mathcal D}} \Big( \sum_p \frac{1+\chi_D(p)}{2}
\frac{\log p}{p} w\Big(\frac{\log p}{\log x}\Big) \Big)^k
\ge 2^{-\log \Delta/\log (x^2)} \Big( 
\sum_p \frac{\log p}{p} w\Big(\frac{\log p}{\log x}\Big)  - k\frac{\log x^2}{x^2}\Big)^k. 
$$
A small calculation with the prime number theorem shows that 
\begin{equation}\label{3.4}
\sum_p \frac{\log p}{p} w\Big(\frac{\log p}{\log x}\Big) = \log x + O(1), 
\end{equation}
so that we may conclude 
$$ 
\sum_{D\in {\mathcal D}} \Big( \sum_p \frac{1+\chi_D(p)}{2} 
\frac{\log p}{p} w\Big(\frac{\log p}{\log x}\Big) \Big)^k
\ge 2^{-\log \Delta/\log (x^2)} ( \log x + O(1))^k.  
$$

On the other hand, by assumption the left side above is also 
$$
\le |{\mathcal D}| \Big( \frac 58 \log x + O(1)\Big)^k. 
$$
For our choice of $k$, the upper and lower bounds are contradictory, establishing the lemma.
\end{proof}

 Our next result gives a criterion in terms of a zero-free region for $L(s,\chi_{D})$ which ensures the bound \eqref{3.3}.  The proof has the flavor of Rodosskii's work on the least quadratic non-residue problem; see Theorem 1 in Chapter 9 of \cite{montgomeryTen}.

\begin{lemma} \label{lem3.2}  Let $D$ be a fundamental discriminant with $1 <|D|\le \Delta$, and let $x$ be a real number with $\Delta \ge x \ge (\log \Delta)^2$.   Suppose that the Dirichlet $L$-function $L(s,\chi_D)$ has no zeros in the region 
$$
\Big\{ s= \sigma+it: \ \ \sigma \ge 1- \frac{1}{\log x} \log \Big(\frac{10\log \Delta}{\log x}\Big), \ \ |t| \le \Delta \Big\}.  
$$
Then, for large $\Delta$, the estimate \eqref{3.3} holds: 
$$
\sum_{p} \frac{\chi_D(p)}{p} \log p\  w\Big(\frac{\log p}{\log x} \Big) 
\le \frac 14 \sum_p \frac{\log p}{p} w \Big(\frac{\log p}{\log x}\Big). $$
\end{lemma}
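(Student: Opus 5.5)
The plan is to use the explicit formula with a smooth weight, as in Rodosskii's argument. Set $L=\log x$ and write the sum over prime powers $\sum_n \Lambda(n)\chi_D(n) n^{-1} w(\log n/L)$. The prime powers $p^m$ with $m\ge 2$ contribute $O(1)$: these are $n=p^m\ge x^2$, so the sum is $\ll \sum_{p} \log p/p^{2}$ over $p\ge x$, which is tiny. The same holds for the unweighted sum on the right. By \eqref{3.4} the right side of \eqref{3.3} is $\frac14 L + O(1)$, so it suffices to show that
$$\sum_n \frac{\Lambda(n)\chi_D(n)}{n} w\Big(\frac{\log n}{L}\Big) \le \frac18 L,$$
or any bound below $\tfrac14 L - C$ for large $\Delta$.

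Next, introduce the Mellin transform $\hat w(s)=\int_0^\infty w(t)e^{st}\,dt$. Since $w$ is a tent function supported on $[2,4]$ with peak at $3$, one has $\hat w(s)=e^{3s}\big((e^{s/2}-e^{-s/2})/s\big)^2$. Then
$$\sum_n \frac{\Lambda(n)\chi_D(n)}{n}w\Big(\frac{\log n}{L}\Big)=\frac{L}{2\pi i}\int_{(c)} -\frac{L'}{L}(1+s,\chi_D)\,\hat w(-sL)\,ds .$$
Shift the contour to the left past $\mathrm{Re}\, s=-1/2$ and pick up the zeros $\rho$ of $L(s,\chi_D)$. There is no pole at $s=0$ because $\chi_D$ is nonprincipal. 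This gives the explicit formula
$$-L\sum_\rho \hat w\big((1-\rho)L\big)\cdot(\text{sign}) + O(\text{small}).$$
Write $\rho=\beta+i\gamma$. Then $|\hat w((1-\rho)L)| \le e^{-3(\beta-1)L}\cdot\ldots$; more precisely $|\hat w(-(\rho-1)L)|\ll x^{-2(1-\beta)}\min(1,(|\gamma| L)^{-2})$. The $(1/|s|^2)$ decay makes the sum over zeros absolutely convergent. The remaining integral on $\mathrm{Re}\, s=-1/2$ is bounded by $x^{-1}\log\Delta$ times $\int(1+|t|)^{-2}\log(|t|+2)\,dt$, and this is $o(1)$ since $x\ge(\log\Delta)^2$.

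The main step is bounding the zero sum, and here the hypothesis enters. For $|\gamma|\le\Delta$ every zero satisfies $1-\beta\ge \delta:=L^{-1}\log(10\log\Delta/L)$, so $x^{-2(1-\beta)}\le (L/(10\log\Delta))^2$. I will group the zeros into unit intervals $|\gamma-T|\le 1$, each containing $\ll\log(\Delta(|T|+2))$ zeros. The zeros with $|\gamma|\le\Delta$ then contribute
$$\ll L\Big(\frac{L}{10\log\Delta}\Big)^2\sum_T \log(\Delta(|T|+2))\min\big(1,(TL)^{-2}\big).$$
The range $|T|\le 1/L$ gives only one unit block. Summing, I get $\ll L\cdot(L/\log\Delta)^2\log\Delta\cdot(1+1/L)$, i.e. $\ll L^3/(100\log\Delta)$. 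For zeros with $|\gamma|>\Delta$ I use the trivial $\beta\le1$, which gives $\ll L\sum_{T>\Delta}\log T/(TL)^2\ll \log\Delta/(\Delta L)=o(1)$.

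The main obstacle is the bookkeeping needed to make the constant come out below $\frac14$. The bound $L^3/\log\Delta$ is not obviously $\le L/8$ when $L$ is close to $\log\Delta$. I therefore expect a more careful count: the zeros with $|\gamma|\le 1/L$ contribute $\ll L\cdot(L/\log\Delta)^2\cdot(\log\Delta/L+1)$, using the local zero count $\ll 1+\log\Delta\cdot(\text{interval length})$ on short intervals, and this is $\ll L^2/\log\Delta+\ldots$. In general the natural normalization $\sum_\rho \min(1,(\gamma L)^{-2})\ll \log\Delta/L$ gives a total $\ll L\cdot(L/10\log\Delta)^2\cdot\log\Delta/L\cdot L$, which is the right shape.

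If that still falls short, I would use the positivity trick $\mathrm{Re}\,\hat w\ge$ something, or replace $w$ by a function whose transform is nonnegative on the critical line so that the zero sum has a definite sign. The factor $10$ in the hypothesis is what makes the final constant at most $\frac18$. It is through the $x^{-2(1-\beta)}$ saving that the width of the zero-free region is converted into the inequality \eqref{3.3}.
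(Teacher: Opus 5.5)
Your framework is the paper's: the explicit formula for the tent weight, with the zero-free region producing the factor $x^{-(1-\beta)}\le \log x/(10\log\Delta)$. However, the decisive estimate for the sum over zeros is not established, and the repairs you propose do not work.

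\begin{itemize}
\item Bounding the kernel by $x^{-2(1-\beta)}\min(1,(|\gamma|\log x)^{-2})$ discards the horizontal distance $1-\beta$. With unit-interval zero counts this gives $(\log x)^3/(100\log\Delta)$. As you note, that fails when $\log x\asymp\log\Delta$, and the lemma must cover this range: $x=\Delta$ is allowed, and the paper later uses $x_1=\Delta$.
\item Your fix is a count $\ll 1+h\log\Delta$ for zeros with $|\gamma|\le h$. This is not known unconditionally for vertical windows. Zeros with $\beta$ near $\tfrac12$ and $|\gamma|\le 1/\log x$ are only controlled by $O(\log\Delta)$. Bounds of the shape $\ll 1+r\log\Delta$ hold only for discs of radius $r$ centred on the line $\sigma=1$.
\item The tally you write for the ``natural normalization'' still equals $(\log x)^3/(100\log\Delta)$, not $(\log x)^2/(100\log\Delta)$.
\item Even the correct shape $c(\log x)^2/(100\log\Delta)\le c\log x/100$ is a fixed fraction of $\log x$ when $x$ is near $\Delta$. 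So you must show $c<25$ explicitly. Zero counts with unspecified implied constants cannot do this, and your claim that the factor $10$ makes the constant at most $\tfrac18$ is unsupported.
\end{itemize}

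The missing idea is to keep $|\rho-1|$, use the zero-free region a second time, and sum over zeros via the Hadamard product. In the explicit formula
\[
(\log x)\sum_n \frac{\Lambda(n)\chi_D(n)}{n}\,w\Big(\frac{\log n}{\log x}\Big)=-\sum_\rho\Big(\frac{x^{2(\rho-1)}-x^{\rho-1}}{\rho-1}\Big)^2+O(x^{-2}),
\]
each zero with $|\gamma|\le\Delta$ satisfies $|x^{2(\rho-1)}-x^{\rho-1}|^2\le 2(\log x/(10\log\Delta))^2$. The zeros with $|\gamma|>\Delta$ contribute $O(\log\Delta/\Delta)$.

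The region also gives $1-\beta\ge 2/\log x$, so one checks $|\rho-1|^{-2}\le (\log x)\,\mathrm{Re}\,(1+1/\log x-\rho)^{-1}$. These real parts are positive for every zero, so you may sum over all zeros. Bounding $L'/L$ trivially by $-\zeta'/\zeta(1+1/\log x)=\log x+O(1)$, the Hadamard product gives
\[
\sum_\rho \mathrm{Re}\,(1+1/\log x-\rho)^{-1}=\tfrac12\log|D|+O(1)+\frac{L'}{L}(1+1/\log x,\chi_D)\le 2\log\Delta+O(1).
\]
Dividing by $\log x$, the prime sum is at most $4(\log x)^2/(100\log\Delta)+O(1)\le \tfrac{1}{25}\log x+O(1)$. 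This is comfortably below $\tfrac14\log x-O(1)$, and no positivity trick or change of $w$ is needed.

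There are two smaller slips.
\begin{itemize}
\item The kernel should be $\hat w(s\log x)$, not $\hat w(-s\log x)$. With your sign the residues $\hat w((1-\rho)\log x)$ grow with $1-\beta$ instead of decaying.
\item Stopping the contour at $\mathrm{Re}\,s=-\tfrac12$ places it on the critical line of $L(1+s,\chi_D)$, where $L'/L$ is not $O(\log(\Delta(|t|+2)))$. Move it past all nontrivial zeros instead; the trivial zeros then contribute $O(x^{-2})$.
\end{itemize}
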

\begin{proof}  By Mellin inversion of the weight $w$, we find 
\begin{align*}
\frac{1}{2\pi i} \int_{1-i\infty}^{1+i\infty} -\frac{L^{\prime}}{L}(s+1,\chi_D) \Big(\frac{x^{2s}-x^s}{s}\Big)^2 ds &= (\log x) \sum_n \frac{\Lambda(n)}{n} \chi_D(n) w\Big(\frac{\log n}{\log x}\Big)\\
&=(\log x) \sum_p \frac{\log p}{p} \chi_D(p) w\Big(\frac{\log p}{\log x}\Big) + O(1), 
\end{align*}
upon estimating the contribution of prime powers trivially. Moving the line of integration to the left, we may alternatively write this as a sum over zeros of $L(s,\chi_D)$.  This yields for the above 
\begin{equation}\label{3.5}
-\sum_\rho \Big(\frac{x^{2(\rho-1)}-x^{\rho-1}}{\rho-1}\Big)^2  
+O(x^{-2}), 
\end{equation}
where the error term accounts for the contribution of trivial zeros (which are a subset of the non-positive integers). 

Since (for any $T\ge 1$) there are $\ll T \log (T|D|)$ non-trivial zeros $\rho$ of $L(s,\chi_D)$ with $|\text{Im }\rho| \le T$, the contribution of the terms with $|\text{Im}\rho| \ge \Delta$ to \eqref{3.5} is easily bounded by $\ll (\log \Delta)/\Delta$.   For the zeros with $|\text{Im} \rho| \le \Delta$, the assumed zero-free region shows that the contribution of these terms to \eqref{3.5} is bounded in size by 
$$ 
\le 2 \Big(\frac{\log x}{10\log \Delta}\Big)^2 \sum_{|\text{Im} \rho| \le \Delta} \frac{1}{|\rho-1|^2} \le 2 \Big(\frac{\log x}{10\log \Delta}\Big)^2 \sum_{|\text{Im}\rho| \le \Delta} (\log x) \text{Re }\frac{1}{1+1/\log x -\rho},
$$
where the last step follows because (writing $\rho = \beta+i\gamma$ and noting that $\beta \le 1- 2/\log x$ by assumption)
$$
\text{Re} \frac{1}{1+1/\log x - \rho} = \frac{(1+1/\log x -\beta)}{(1+1/\log x -\beta)^2 + \gamma^2} \ge \frac{3/\log x}{(\frac 32 (1-\beta))^2 +\gamma^2} \ge \frac{1}{|\rho-1|^2 \log x }. 
$$
We conclude that the quantity in \eqref{3.5} is bounded in size by  
\begin{equation}\label{3.6}
\le 2 \Big(\frac{\log x}{10\log \Delta}\Big)^2 (\log x) \sum_{\rho} \text{Re} \frac{1}{1+1/\log x -\rho} +O\Big(\frac{\log \Delta}{\Delta}+x^{-2}\Big).
\end{equation}

Taking logarithmic derivatives in the Hadamard factorization formula (see Corollary 10.18 of \cite{montgomeryVaughan}) gives 
$$
 \sum_{\rho}\text{Re} \frac{1}{1+1/\log x -\rho} = \tfrac 12 \log |D| +O(1) + \frac{L^{\prime}}{L} (1+1/\log x, \chi_D) \le 2 \log \Delta + O(1),
 $$
upon bounding $L^{\prime}/L(1+1/\log x, \chi_D)$ trivially by $-\zeta^{\prime}/\zeta(1+1/\log x) = \log x + O(1)$. 
Using this in \eqref{3.6} we conclude that 
$$
\sum_{p} \frac{\log p}{p} \chi_D(p) w\Big(\frac{\log p}{\log x}\Big)
\le 2\Big(\frac{\log x}{10\log \Delta}\Big)^2 (2\log \Delta +O(1)) + 
O\Big(\frac{1}{\log x} + \frac{\log \Delta}{\Delta}\Big). 
$$
Since $(\log \Delta)^2 \le x \le \Delta$, for large enough $\Delta$ the desired estimate now follows from \eqref{3.4}.
\end{proof}

We are now ready to prove the unconditional part of the theorem.  Put $J= \lceil \log(\frac 12 \log \Delta)\rceil$ and for $1\le j\le J$ let $\beta_j =1 - e^{j}/\log \Delta$.  For $1\le j\le J-2$, define ${\mathcal D}_j$ to be the set of fundamental discriminants $D$ with $1<|D|\le \Delta$ such that $L(s,\chi_D)$ has no zeros in the region $\{\sigma +it: \ \sigma \ge \beta_j, |t|\le \Delta\}$ but does have a zero in the region $\{ \sigma+it: \ \sigma \ge \beta_{j+1}, |t| \le \Delta\}$.  Let ${\mathcal D}_0$ denote the set of fundamental discriminants $D$ with $1<|D|\le \Delta$ which have a zero in the region $\{\sigma \ge \beta_1, \ |t|\le \Delta\}$.  Finally define ${\mathcal D}_{J-1}$ to be any remaining fundamental discriminants with $1<|D|\le \Delta$ that are not included in the sets ${\mathcal D}_0$, $\ldots$, ${\mathcal D}_{J-2}$.   The sets ${\mathcal D}_0$, $\ldots$, ${\mathcal D}_{J-1}$ clearly partition the set of all fundamental discriminants $D$ with $1<|D|\le \Delta$.  By a standard log-free zero density estimate (see, e.g., \cite{gallagherLarge}), we have 
\begin{equation}\label{zeroDensityDj}
|\mathcal{D}_j| \le C_1 \exp( C_2 e^j),  
\end{equation}
for some absolute constants $C_1$ and $C_2$ and all  $0 \le j \le J-1$ (we may assume that $C_2\ge 4e$ so that the bound for ${\mathcal D}_{J-1}$ is trivial, being much larger than $\Delta$).

For each $1\le j\le J-1$ put $x_j = \Delta^{j/e^{j-1}}$.  Note that $(\log \Delta)^2 \le x_j \le \Delta$, and that the fundamental discriminants in ${\mathcal D}_j$ satisfy the zero-free region required in Lemma \ref{lem3.2} and therefore the conclusion there holds (with $x=x_j$ there).   If ${\mathcal D}_j$ is empty, take $n_j=1$.  If ${\mathcal D}_j$ is non-empty, Lemma \ref{lem3.1}  furnishes an integer $n_j$ with at most $5(\log C_1 + C_2 e^j + e^{j-1}/j)+1$ prime factors all in $[x_j^2,x_j^4]$ such that for each $D\in {\mathcal D}_j$ there is some prime factor $p$ of $n_j$ with $\chi_D(p)=-1$.  Note that $n_j$ satisfies the bound 
\begin{equation}
    \label{3.8}
   \log  n_j \le 20 \Big( \frac{j}{e^{j-1}}\log C_1 + e\cdot  j(C_2+1)\Big) \log \Delta.
\end{equation}

There remain a bounded number of fundamental discriminants $D$ in ${\mathcal D}_0$.  For each such $D$, we may easily find an odd prime $p\le (2\Delta)$ with $\chi_D(p)=-1$.  (To see this, note that if $\chi_D(2)=-1$ then $|D|+2$ is odd and satisfies $\chi_D(|D|+2)=-1$ so that some odd prime factor of $|D|+2$ works.  If $\chi_D(2)\neq -1$ then there must be some odd integer $a$ with $1\le a\le |D|$ with $\chi_D(a)=-1$ and some prime factor of $a$ works.)  Take $n_0$ to be the product of such primes $p$, so that $n_0 \le (2\Delta)^{C_1 e^{C_2}}$.  

Define $n$ to be the product of the primes dividing $n_0 n_1\cdots n_{J-1}$.  Then $n$ is odd and square-free and by \eqref{3.8} satisfies the bound 
$$ 
\log n \le (\log 2\Delta) \Big( C_1 e^{C_2} + 80 \log C_1 + 40 J^2 (C_2+1)\Big).
$$
Further, by construction, for each fundamental discriminant $D$ with $1<|D|\le \Delta$ there exists some prime factor $p$ of $n$ with $\chi_D(p)=-1$.  This completes our proof.

\section{Representations by binary quadratic forms: Proof of Theorem \ref{thm1.3}}\label{applications}

\noindent We begin by recalling some classical facts on representing integers by binary quadratic forms.   If $f$ is a primitive binary quadratic form of discriminant $Dt^2$ where $D$ is a fundamental discriminant, then there is an integral linear transformation $T$ with determinant $t$ and a form ${\tilde f}$ of discriminant $D$ such that $f(x,y)= {\tilde f}(T(x,y))$ (see Proposition 7.1 of \cite{buellBinary}).   Note that the form ${\tilde f}$ is automatically primitive, since its discriminant is fundamental.  Thus the integers represented by primitive binary quadratic forms of discriminant $Dt^2$ are also represented by (primitive) binary quadratic forms of discriminant $D$. This observation allows us to restrict attention to binary quadratic forms with fundamental discriminants.

Secondly, if $D$ is a fundamental discriminant, then the positive integer $n$ is properly represented by some binary quadratic form of discriminant $D$ if and only if $D$ is a square $\bmod \ {4n}$ (see Proposition 4.1 of \cite{buellBinary} but beware that the restriction to proper representations is inadvertently omitted there).  Here, a representation of $n$ as $f(x,y)$ is called proper if $\gcd(x,y)=1$.  Note that if $n$ is square-free then any representation is automatically proper.

Consider the first statement in Theorem \ref{thm1.3}.  For any positive integer $n$, note that $\ell(4n)$ is square-free and must be $1\bmod 4$ and is therefore a fundamental discriminant.  If we denote this fundamental discriminant by $D$, then $D$ is a square  $\bmod \ {4n}$, and therefore $n$ may be represented by some indefinite binary quadratic form of discriminant $D$.   The first statement of Theorem \ref{thm1.3} follows from the bound \eqref{1.2} (which followed from the upper bound in Theorem \ref{thm1.1}).   

 Next consider the third statement in Theorem \ref{thm1.3}.  Theorem \ref{thm1.2} produces a square-free integer $n$ with the property that for all fundamental discriminants $D$ with $1<|D|\le \Delta$ one has $\chi_D(p)=-1$ for some prime factor $p$ of $n$.  This means that $D$ is not a square $\bmod \ n$, and therefore $n$ cannot be represented by any binary quadratic form of discriminant $D$.  The third statement in Theorem \ref{thm1.3} follows from the bounds on $n$ given in Theorem \ref{thm1.2}.  

 Lastly consider the second statement in Theorem \ref{thm1.3}.  Let $n \le N$ be given, and we wish to find (assuming GRH) a negative fundamental discriminant $D$ with $0 < -D =|D|\le \Delta$ such that $n$ is represented by a binary quadratic form of discriminant $D$.  If a number $m$ is represented by such a binary quadratic form, then by scaling clearly $mt^2$ would also be represented by the same binary quadratic form.  We may therefore assume that $n$ is square-free, and write $n=2^a m$ with $a=0$ or $1$ and $m$ odd and square-free.  Further, we will restrict attention to fundamental discriminants $D$ of the form $D=-\ell$ with $\ell$ being a prime $\equiv 7 \bmod 8$.  Since such $D=-\ell$ are $1 \bmod {8}$, it is clear that $D$ is a square modulo $4 \times 2^a$, and we need only find such $D$ that are squares modulo the odd part $m$.  Now note that $\sum_{d|m} (\frac{-\ell}{d}) = 0$ unless $-\ell$ is a square modulo ${m}$.  With this in mind, consider 
 $$
 \sum_{\substack{ \ell \le \Delta \\ \ell\equiv 7 \bmod 8}} \log \ell \Big(\sum_{d|m} \Big(\frac{-\ell}{d}\Big) \Big)
 = \sum_{d|m} \sum_{\substack{ \ell \le \Delta \\ \ell\equiv 7 \bmod 8}} \log \ell \Big(\frac{-\ell}{d}\Big). 
 $$
The term $d=1$ gives $\tfrac 14 \Delta +O(\sqrt{\Delta} (\log \Delta)^2)$ (since GRH and therefore RH is assumed), and the terms with $d>1$ (and there are $2^{\omega(m)}-1$ such terms) each give $O(\sqrt{\Delta} (\log n\Delta)^2)$ (by GRH).   Thus the above is 
$$ 
\frac 14 \Delta + O( 2^{\omega(n)} \sqrt{\Delta} (\log n\Delta)^2), 
$$
so that if $\Delta$ is larger than $C 4^{\omega(n)} (\log n)^4$ for a suitable constant $C$, then there exists a prime $\ell \le \Delta$ with $\ell \equiv 7 \bmod 8$ and $-\ell$ being a square  $\bmod\  {4n}$. Since $\omega(n) \le (1+o(1)) \log N/\log \log N$ for all $n\le N$, the second assertion in Theorem \ref{thm1.3} follows.

\section{Further questions}

Let us flesh out a question implicit in our work in Section 2.   Given a natural number $k$, determine the smallest positive integer $L(k)$ such that for all choices of $k$ completely multiplicative functions $f_j: {\mathbb N} \to \{-1, 1\}$ (with $1\le j\le k$) there exists a square-free integer $2\le r\le L(k)$ with $f_1(r)=f_2(r)=\ldots = f_k(r)=1$.  

If ${\mathcal A}$ is any set of $2^{k}+1$ square-free integers, then there must exist two distinct elements $a$, $b$ in ${\mathcal A}$ with $f_j(a)=f_j(b)$ for all $1\le j\le k$.   Then $r= ab/(a,b)^2$ is a square-free integer larger than $1$ with $f_j(r)=1$ for all $1\le j\le k$.   Thus $L(k)$ is bounded above by $\max_{a\neq b \in {\mathcal A}} ab/(a,b)^2$ for any such set ${\mathcal A}$, and we can try and minimize this quantity over possible sets ${\mathcal A}$.   One choice for ${\mathcal A}$ is simply to take the first $2^{k}+1$ square-free integers (starting with $1$).  This example gives $L(1)\le 6$, $L(2)\le 30$ and $L(3)\le 143$, and in these cases equality is attained by means of the following examples.  For $k=1$ consider a  completely multiplicative function $f_1$ with $f_1(2)=f_1(3)=f_1(5)=-1$.  For $k=2$ consider  
\begin{center}
\begin{tabular}{ |c|c|c|c|c| } 
\hline
  & $2$ & $3$ & $5$ & $p \ge 7$ \\ 
   \hline
 $f_1$ & $1$ & $-1$ & $-1$ & $-1$ \\ 
 $f_2$ & $-1$ & $1$ & $-1$ & $-1$ \\ 
 \hline
\end{tabular}
\end{center}
For $k=3$ consider 
\begin{center}
\begin{tabular}{ |c|c|c|c|c|c|c|c|c| } 
\hline
  & $2$ & $3$ & $5$ & $7$ & $p \ge 11$ \\ 
   \hline
 $f_1$ & $1$ & $1$ & $-1$ & $-1$ & $-1$\\ 
 $f_2$ & $1$ & $-1$ & $1$ & $-1$ & $-1$\\
 $f_3$ & $-1$ & $1$ & $1$ & $-1$ & $1$ \\
 \hline
\end{tabular}
\end{center}
When $k=4$, a better choice for ${\mathcal A}$ is $\{ 1, 2, 3, 5, 6, 7, 10, 11, 13, 14, 15, 17, 19, 21, 22, 26, 30\}$ which leads to the upper bound $L(4) \le 570$ (which is a little better than simply taking the first $17$ square-free numbers).    We found that in fact  $L(4)=570$, and also determined that $L(5)=2679$.   

Our work in Proposition \ref{prop2.1} shows that $L(k) \ge 4^{k-Ck/\log k}$.    Since the number of square-free integers up to $x$ is $\sim \frac{6}{\pi^2} x$, the upper bound furnished by taking ${\mathcal A}$ to be the first $2^k+1$ square-free integers is asymptotically  $L(k) \le (\frac{\pi^4}{36}+o(1)) 4^k$.

\par Another question of independent interest concerns the minimum proportion of integers $n\le x$ (with $x$ large) for which $f_1(n) = \dots = f_k(n) = 1$ where $f_1, \dots, f_k: \mathbb{N} \to \{ -1, 1 \}$ are completely multiplicative functions. When $k = 1$, Granville and Soundararajan \cite{granvilleSpectrum} showed that for large $x$ 
\[
| \{ n\le x: f_1(n)=1\} | \ge (\delta_1+o(1))x,
\]
uniformly over all completely multiplicative functions $f_1:\mathbb{N}\to \{-1, 1\}$, 
where 
\[
\delta_1 = 1 - \log(1 + \sqrt{e}) + 2 \int_1^{\sqrt{e}} \frac{\log t}{t+1}\,dt = 0.171500\dots .
\]
Equality is attained essentially when 
\[f_1(p) = \begin{cases}
    1 & \text{ for }\hspace{18pt}p \le x^{1/(1+\sqrt{e})} \\
    -1 & \text{ for }\hspace{5pt}x^{1/(1+\sqrt{e})} < p \le x.
\end{cases}\]

\par Similarly, it can be shown that there exists a constant $\delta_2 > 0$ for which
\[
|\{n \le x: f_1(n)=f_2(n)=1\}|  \ge (\delta_2+o(1))x, 
\]
uniformly for all completely multiplicative functions $f_1, f_2: \mathbb{N} \to \{ -1, 1\}$. The best upper bound for $\delta_2$ that we found through numerical investigations is $0.068921\dots$ which is attained by the example 
\[
f_1(p) = \begin{cases}
    1 & \text{ for }\hspace{18pt}p \le x^{1/\alpha} \\
    -1 & \text{ for }\hspace{5pt}x^{1/\alpha} < p \le x
\end{cases} \hspace{15pt} f_2(p) = \begin{cases}
    1 & \text{ for }\hspace{23pt}p \le x^{1/\alpha\beta} \\
    -1 & \text{ for }\hspace{5pt}x^{1/\alpha\beta} < p \le x^{1/\alpha} \\ 
    \pm 1 &\text{ for } \hspace{10pt}x^{1/\alpha} < p \le x
\end{cases}\]
with $\alpha = 2.521469\dots$ and $\beta = 2.796705\dots$; in the range $p>x^{1/\alpha}$ the values of $f_2(p)$ alternate in sign as $p$ increases (or are chosen randomly to be $\pm 1$ with equal probability).  It may be interesting to work out a decent lower bound for $\delta_2$, or to see if better upper bounds exist than the above example. 

\par For all $k$ one can show the existence of a constant $\delta_k >0$ such that  
\[
|\{ n\le x: f_1(n)= f_2(n) =\ldots = f_k(n)=1 \} | \ge (\delta_k + o(1))x, \] 
uniformly for all completely multiplicative functions $f_1, \dots, f_k: \mathbb{N} \to \{ -1, 1\}$, and it may be of interest to determine good lower bounds for $\delta_k$.

\bibliographystyle{unsrt}
\bibliography{main}{}

\end{document}